\title[Asymptotic Schur orthogonality relations for Heisenberg group]{Asymptotic Schur orthogonality relations for Heisenberg groups over local fields}
\author{Malay Mandal}
\address{Malay Mandal
\newline
The Institute of Mathematical Sciences, X6VW+FP7, 4th Cross St, CIT Campus, Tharamani, Chennai, Tamil Nadu 600113.}
\email{malay10mandal@gmail.com}
\author{Arghya Mondal}
\address{Arghya Mondal
\newline
IISER Mohali,  Knowledge city, Sector 81, Manauli, PO, Sahibzada Ajit Singh Nagar, Punjab 140306}
\email{arghyamondal@iisermohali.ac.in}
\email{mondalarghya1990@gmail.com}
\urladdr{https://web.iisermohali.ac.in/dept/math/people/arghyamondal}
\thanks{AM is supported in part by a Department of Science and Technology Inspire Faculty Fellowship.}
\date{\today}
\newtheorem{theorem}{Theorem}[section]
\newtheorem{lemma}[theorem]{Lemma}
\newtheorem{prop}[theorem]{Proposition}
\newenvironment{definition}[1][Definition]{\begin{trivlist}
\item[\hskip \labelsep {\bfseries #1}]}{\end{trivlist}}
\newenvironment{remark}[1][Remark]{\begin{trivlist}
\item[\hskip \labelsep {\bfseries #1}]}{\end{trivlist}}
\def\r{\mathbb{R}}
\def\c{\mathbb{C}}
\def\n{\mathbb{N}}
\begin{document}
\begin{abstract} Asymptotic Schur orthogonality relations are for irreducible unitary representations of locally compact groups that need not be discrete series, where $L^2$ pairing of matrix coefficients with respect to Haar measure is replaced by a limit of that with respect to a sequence of bounded measures. We show that such relations hold for Heisenberg groups over local fields. This is achieved in the framework of c-temperedness introduced by Kazhdan and Yom Din. The related condition of convergence to braiding operator is also shown. 
    
\end{abstract}
\keywords{Schur orthogonality relations, Heisenberg groups}
\subjclass[2020]{22D10 (Primary), 22E27}
\maketitle
\section{Introduction}

\noindent Schur orthogonality relations state that, for locally compact unimodular groups, the $L^2$ inner product of matrix coefficients of distinct discrete series representations is orthogonal, and that of matrix coefficients from the same discrete series representation has a simple description in terms of inner products of the vectors involved. For irreducible unitary representations that are not discrete series, one may try to replace the $L^2$-pairings (i.e. $(f_1,f_2)\mapsto f_1\overline{f_2}$) of matrix coefficients with respect to Haar measure $\mu$ by a limit of $L^2$-pairings with respect to a sequence $\{\mu_n\}_n$ of appropriate measures. These measures are generally taken to be absolutely continuous with respect to the Haar measure. This approach had been taken by Midorikawa \cite{mid1, mid2} and Ankabout \cite{ankabout} for tempered representations of semisimple Lie groups, where the Radon-Nikodym derivatives of their chosen measures have full support. The resulting \textit{asymptotic} Schur orthogonality relations are valid for vectors in the dense subspace of $K$-finite vectors. The next such result was proved by Boyer and Garncarek \cite{bg} for boundary representations of hyperbolic groups, where the Radon-Nikodym derivatives are supported in an annulus shaped subsets around identity, with respect to the word metric.  Their asymptotic Schur orthogonality relations are valid for all vectors.

\vspace{0.2cm}
\noindent A general framework for deriving asymptotic Schur orthogonality relations was given by Kazhdan and Yom Din in \cite{kyd} by introducing the notion of \textit{c-temperedness} of a representation. The definition of c-temperedness formalizes the following idea. Given an irreducible unitary representation $\pi$, if there exists a sequence $\{F_n\}_n$ of pre-compact subsets  of $G$ for which the average growth rates of the norm-square of all non-zero matrix coefficients of $\pi$ are roughly the same and the appropriately scaled averages of these functions over these subsets are asymptotically $G$-invariant, then the proof of classical Schur Orthogonality can be adapted to get asymptotic Schur orthogonality relations for $\pi$. In particular, the Radon Nikodym derivatives are of the form $(1/a_n)1_{F_n}$, where $a_n$ are the scaling factors. See \S\ref{ctemp} for details. They conjectured that asymptotic Schur orthogonality relations with respect to such measures should hold for all vectors in tempered representations of semisimple algebraic groups over local fields and proved it when $G=\text{SL}_2(\r)$ or $\text{PSL}_2(\Omega)$, where $\Omega$ is a non-Archimedean field of characteristic 0 and of residual characteristic not equal to 2. They also proved it for all such groups when $\pi$ is the quasi-regular representation $L^2(
G/P)$, where $P$ a minimal parabolic
subgroup. Without these restriction, the statement of the conjecture holds only for $K$-finite vectors - the case of non-archimedean fields was shown in the same paper, while the archimedean case was completed by Aubert and La Rossa \cite{aublaros} based on that work.

\vspace{0.2cm}
\noindent A rationale for the existence of a sequence of measures, that gives asymptotic Schur orthogonality relations for a given irreducible unitary representation, was given by Bendikov, Boyer and Pittet in the Introduction of \cite{bbp}. They observed that asymptotic Schur orthogonality  will hold for a representation $(\pi,V_\pi)$ with respect to bounded Borel measures $\{\mu_n\}_n$ if they satisfy the following convergence in weak operator topology on $\mathcal{B}(V_\pi\otimes V_\pi)$:
\begin{equation}\label{convtoF}
\lim_{n\to\infty}\int_G\pi(g^{-1})\otimes\pi(g)d\mu_n=F,
\end{equation}
where $F$ is the \textit{braiding} or \textit{flip} operator given by $F(v\otimes w):=w\otimes v$. They showed that (\ref{convtoF}) holds for the quasi regular representation of semisimple algebraic groups $G$ over local fields on $L^2(G/P)$, where $P$ is a minimal parabolic subgroup, and that of automorphism groups of regular trees. 

\vspace{0.2cm}
\noindent The starting point of our investigation is the following observation. For \textit{finite dimensional} irreducible unitary representations $(\pi,V_\pi)$ of second countable locally compact amenable groups there exists a F\o lner sequence $\{F_n\}_n$ such that the measures $d\mu_n:=(\dim V_\pi /\mu(F_n))1_{F_n}d\mu$ satisfy (\ref{convtoF}). Such representations are c-tempered with respect to $\{F_n\}_n$ and asymptotic Schur orthogonality relations hold for them with respect to these measures. See Theorem \ref{findim}.

\vspace{0.2cm}
\noindent The above observation motivates us to consider \textit{infinite dimensional} irreducible representations of amenable groups. In this paper we study the case of Heisenberg groups over local fields and show that there exists a F\o lner sequence such that all unitary irreducible representations of such groups are c-tempered with respect to this sequence, hence asymptotic Schur orthogonality relations hold for them. Further, (\ref{convtoF}) holds for measures that are supported on the sets in the F\o lner sequence. See Theorem \ref{hei}. This  Representation theory of Heisenberg groups is well studied with motivation coming from both physics and mathematics. In particular, the Fourier-Wigner transform and related ideas (see \S\ref{secfw}) carry through the proof of Theorem \ref{hei}.

\vspace{0.2cm}
\noindent Theorem \ref{hei} shows that the family of Heisenberg groups over local fields is an answer to  \cite[Question 1.17]{kyd}: For which groups $G$ every tempered irreducible unitary $G$-representation is c-tempered with some F\o lner sequence? Theorem \ref{findim} also provides examples: second countable \textit{Moore groups}, that is, locally compact groups all whose irreducible representations are finite dimensional. See \cite[Theorem 4.C.1]{BdlH} for characterization and examples of Moore groups. 

\vspace{0.2cm}
\noindent Following is a section wise description of the paper. Section \ref{prelimctemp} gathers the results we need about c-temperedness and convergence to the braiding operator. Section \ref{prelimheisenberg} covers the material we need about unitary representation theory of Heisenberg group. We included brief descriptions of induced representations (with emphasis on the case where the subgroup is split normal) and Mackey machine for semidirect product of abelian groups, which are needed for the description of unitary dual of the Heisenberg group in \S\ref{secdual}. This material is well known and available in introductory harmonic analysis books, except perhaps that we work over a general local field instead of $\r$. The last subsection is on Fourier-Wigner Transform which is the main tool in our arguments. Section \ref{secfindim} is about finite dimensional representations of second countable locally compact amenable groups and  section \ref{secheisenberg} contains the results about Heisenberg groups over local fields.



    









\section{Preliminaries on c-temperedness and  braiding operator}\label{prelimctemp}
\subsection{c-temperedness and asymptotic Schur orthogonality}\label{ctemp} 
The following definition is due to Kazhdan and Yom-Din. 

\begin{definition}\cite[Definition 2.1]{kyd}
Let $(\pi,V_\pi)$ be an irreducible unitary representation of a locally compact unimodular group $G$. Let $\{F_n\}_n$ be a sequence of measurable pre-compact subsets containing a neighbourhood of $1$. We say
that $\pi$ is \textit{c-tempered with F\o lner sequence $\{F_n\}_n$} if there exists a unit vector $v_0\in V_\pi$ such that

i. For all $v_1,v_2\in V_\pi$
\begin{equation*}
\limsup_{n\to\infty}\frac{\int_{F_n}|\langle\pi(g)v_1,v_2\rangle|^2d\mu(g)}{\int_{F_n}|\langle\pi(g)v_0,v_0\rangle|^2d\mu(g)}<\infty.
\end{equation*}

ii. For all $v_1,v_2\in V_\pi$ and all compact subsets $C\subset G$ we have
\begin{equation*}
\lim_{n\to\infty}\frac{\sup_{g_1,g_2\in C}\int_{g_2^{-1}F_ng_1\Delta F_n}|\langle\pi(g)v_1,v_2\rangle|^2d\mu(g)}{\int_{F_n}|\langle\pi(g)v_0,v_0\rangle|^2d\mu(g)}=0.
\end{equation*}
\end{definition}


\noindent Such representations satisfy asymptotic Schur orthogonality relations \cite[Proposition 2.1]{kyd} which essentially say that a much stronger form of condition i. holds: if $v_1,v_2$ and $w_1,w_2$ are unit vectors then the ratio of the integrals of the norm squares of the respective matrix coefficients over $F_n$ are \textit{asymptotically equal}:
\begin{equation*}
\lim_{n\to\infty}\frac{\int_{F_n}|\langle\pi(g)v_1,v_2\rangle|^2d\mu(g)}{\int_{F_n}|\langle\pi(g)w_1,w_2\rangle|^2d\mu(g)}=1.
\end{equation*}
On the other hand condition i. can be thought of as a weaker form of equivalence defined as follows. 
On the set of non-negative sequences define a  pre-order $\{a\}_n\preceq\{b_n\}_n$ if $\limsup_{n\to\infty}a_n/b_n<\infty$ (or equivalently $\liminf_{n\to\infty}b_n/a_n>0$). This induces an equivalence relation given by $\{a_n\}_n\sim\{b_n\}_n$ if
\begin{equation}\label{seqeqrel}
    0<\liminf_{n\to\infty}\frac{a_n}{b_n}\le \limsup_{n\to\infty}\frac{a_n}{b_n}<\infty.
\end{equation}
In this case, we say $\{a_n\}_n$ and $\{b_n\}_n$ are \textit{asymptotically equivalent}.

\begin{lemma}\label{zerolim}
Let $\{a_n\}_n$ and $\{b_n\}_n$ be two asymptotically equivalent sequences. For any sequence $\{c_n\}_n\subset\r$, if $\lim_{n\to\infty}c_n/a_n=0$ then $\lim_{n\to\infty}c_n/b_n=0$.
\end{lemma}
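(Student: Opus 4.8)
The plan is to exploit the elementary multiplicative identity
\begin{equation*}
\frac{c_n}{b_n}=\frac{c_n}{a_n}\cdot\frac{a_n}{b_n}
\end{equation*}
and to control each of the two factors on the right separately. Before this identity can be used, however, I would first record that the quotients appearing in it are well-defined for all large $n$. This is where the definition of asymptotic equivalence (\ref{seqeqrel}) is needed: since $\liminf_{n\to\infty}a_n/b_n>0$, the denominators $b_n$ must be nonzero for all sufficiently large $n$, and moreover $a_n>0$ for all large $n$ as well---for if $a_n=0$ along some subsequence, then $a_n/b_n$ would vanish along that subsequence, forcing $\liminf_{n\to\infty}a_n/b_n=0$, contrary to assumption. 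Hence there is an index $N$ beyond which both $a_n$ and $b_n$ are strictly positive, so the decomposition above and the hypothesis $c_n/a_n\to 0$ both make sense for $n\ge N$.

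Next I would use the other half of the asymptotic equivalence, namely $\limsup_{n\to\infty}a_n/b_n<\infty$, to extract a constant $M>0$ and an index $N'\ge N$ for which
\begin{equation*}
\frac{a_n}{b_n}\le M\qquad\text{for all }n\ge N'.
\end{equation*}
In other words, the factor $a_n/b_n$ is a bounded sequence. Finally, combining this boundedness with the hypothesis that $\{c_n/a_n\}_n$ is a null sequence, the product $c_n/b_n=(c_n/a_n)(a_n/b_n)$ is a null sequence multiplied by a bounded sequence, and therefore also tends to $0$; this yields $\lim_{n\to\infty}c_n/b_n=0$, as required.

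I do not anticipate any genuine obstacle here, as the statement is a direct consequence of the definition of asymptotic equivalence together with the algebra of limits. The only point that merits a moment's attention is the verification that the denominators $b_n$ (and $a_n$) do not vanish for large $n$, which is precisely what the strict positivity built into (\ref{seqeqrel}) is designed to guarantee.
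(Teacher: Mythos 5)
Your proof is correct and follows essentially the same route as the paper's: both rest on the factorization $c_n/b_n=(c_n/a_n)(a_n/b_n)$ together with the eventual boundedness of $a_n/b_n$ supplied by $\limsup_{n\to\infty}a_n/b_n<\infty$ (the paper packages this as a squeeze between $(c_n/a_n)\inf_{m\ge n}a_m/b_m$ and $(c_n/a_n)\sup_{m\ge n}a_m/b_m$, which is the same idea). If anything, your ``null times bounded'' formulation handles possibly negative $c_n$ slightly more cleanly than the paper's two-sided inequality as written, and your remark on non-vanishing of $a_n,b_n$ for large $n$ is a reasonable bit of extra care.
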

\begin{proof}
Taking limit $n\to\infty$ in the following inequality implies the statement. 
\begin{equation*}
    \frac{c_n}{a_n}\inf_{m\ge n}\frac{a_m}{b_m}\le\frac{c_n}{b_n}\le\frac{c_n}{a_n}\sup_{m\ge n}\frac{a_m}{b_m}\qedhere
\end{equation*}
\end{proof}
\noindent It is noted in \cite[Proposition 2.4]{kyd} that the choice of the unit vector in the definition of c-temperedness is immaterial. It follows from the above discussion that a reformulation of the definition, without choosing a unit vector $v_0$, is as follows:   

(i) The sequences $\{\int_{F_n}|\langle\pi(g)v_1,v_2\rangle|^2d\mu(g)\}_n$, as $v_1,v_2$ vary in $V_\pi\setminus\{0\}$, are all asymptotically equivalent.   

(ii) For all compact $C\subset G$,
\begin{equation*}
\lim_{n\to\infty}\frac{1}{a_n}\sup_{g_1,g_2\in C}\int_{g_2^{-1}F_ng_1\Delta F_n}|\langle\pi(g)v_1,v_2\rangle|^2d\mu(g)=0,
\end{equation*}
for some, and hence all (by Lemma \ref{zerolim}), $\{a_n\}_n$ in the equivalence class given by (i).
\noindent We state the abstract asymptotic Schur orthogonality results of Kazhdan and Yom Din in a form convenient for us.

\begin{prop}\cite[Propositions 2.3 and 2.5]{kyd}\label{aso}
Let $G$ be a locally compact unimodular group with Haar a measure $\mu$. Let $(\pi,V_\pi)$ be a c-tempered representation of $G$ with F\o lner sequence $\{F_n\}_n$.  Then for all unit vectors $v_1,v_2\in V_\pi$ the sequences $\{\int_{F_n}|\langle\pi(g)v_1,v_2\rangle|^2d\mu(g)\}_n$ are asymptotically equal.  Any sequence in this class of asymptotically equal sequences will be called a \emph{scaling sequence for $\pi$}. For any scaling sequence $\{a_n\}_n$, it follows via the polarization identity, that 
for $v_1,w_1,v_2,w_2\in V_\pi$,
\begin{equation*}
\lim_{n\to\infty}\frac{1}{a_n}\int_{F_n}\langle\pi(g)v_1,v_2\rangle\overline{\langle\pi(g)w_1,w_2\rangle}d\mu=\langle v_1,w_1\rangle\overline{\langle v_2,w_2\rangle}, 
\end{equation*}
Suppose $(\rho,V_\rho)$ is another c-tempered representation with the same F\o lner sequence and a (possibly different) scaling sequence $\{b_n\}_n$. If $\rho$ is not equivalent to $\pi$ then for any $v_1,v_2\in V_\pi$ and $w_1,w_2\in V_\rho$,
\begin{equation*}
\lim_{n\to\infty}\frac{1}{\sqrt{a_nb_n}}\int_{F_n}\langle\pi(g)v_1,v_2\rangle\overline{\langle\rho(g)w_1,w_2\rangle}d\mu=0.
\end{equation*}
\end{prop}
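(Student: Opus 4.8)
The plan is to reduce all three assertions to a study of the averaged operators obtained by truncating and rescaling the matrix-coefficient integrals, and then to run the classical Schur argument with genuine intertwiners replaced by asymptotic ones. Fix vectors $w,w'\in V_\pi$ and, for each $n$, define an operator $T_n=T_n(w,w')$ on $V_\pi$ by the sesquilinear form $\langle T_n v,v'\rangle=\frac{1}{a_n}\int_{F_n}\langle\pi(g)v,w\rangle\overline{\langle\pi(g)v',w'\rangle}\,d\mu(g)$, which is linear in $v$ and conjugate-linear in $v'$. For fixed $n$ this form is bounded since $F_n$ is precompact, so $T_n\in\mathcal{B}(V_\pi)$; condition (i), which asserts $\limsup_n\frac{1}{a_n}\int_{F_n}|\langle\pi(g)v,w\rangle|^2\,d\mu<\infty$ for all $v,w$, gives pointwise boundedness of $\{\langle T_n v,v'\rangle\}_n$. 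Applying the uniform boundedness principle to the square roots $T_n(w,w)^{1/2}$ of the positive operators $T_n(w,w)$ upgrades this to $\sup_n\|T_n(w,w)\|<\infty$, and a Cauchy--Schwarz estimate bounds $\|T_n(w,w')\|$ by the geometric mean of $\|T_n(w,w)\|$ and $\|T_n(w',w')\|$. Hence $\{T_n\}_n$ lies in a weak-operator compact ball and has subsequential limits in the weak operator topology.

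The engine of the proof is the asymptotic invariance supplied by condition (ii). By unimodularity the substitution $g\mapsto gh$ turns $\langle T_n\pi(h)v,\pi(h)v'\rangle$ into the same integral over $F_n h$ in place of $F_n$, so the difference $\langle\pi(h)^{-1}T_n\pi(h)v,v'\rangle-\langle T_n v,v'\rangle$ is the integral over the symmetric difference $F_n h\,\Delta\,F_n$. Splitting this by Cauchy--Schwarz, one factor is $o(a_n^{1/2})$ by condition (ii) (with $C\supseteq\{e,h\}$) while the other is $O(a_n^{1/2})$ by condition (i), so after dividing by $a_n$ the commutator $\pi(h)^{-1}T_n\pi(h)-T_n$ tends to $0$ in the weak operator topology for every $h\in G$. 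Consequently every weak-operator subsequential limit $T$ of $\{T_n\}$ commutes with all $\pi(h)$, and since $\pi$ is irreducible, Schur's lemma forces $T=\lambda\,\mathrm{Id}$. The delicate points are exactly this commutator estimate and securing the uniform boundedness needed for compactness; everything downstream is bookkeeping.

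To identify the scalar and obtain the first assertion I would read the same integrand in two ways. Fixing the second-slot vectors as above gives $\frac{1}{a_n}\int_{F_n}|\langle\pi(g)v,w\rangle|^2\,d\mu\to\lambda(w)\|v\|^2$, while fixing the first-slot vectors and translating on the left (condition (ii) again, now via $g\mapsto hg$) gives the same limit in the form $\kappa(v)\|w\|^2$. A compactness-and-contradiction argument shows the full sequences converge, not merely along subsequences, so the ratio $\int_{F_n}|\langle\pi(g)v,w\rangle|^2\big/\int_{F_n}|\langle\pi(g)v,w''\rangle|^2$ tends to $\|w\|^2/\|w''\|^2$ and, symmetrically, the ratio over different first vectors tends to the ratio of their norms. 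Multiplying these two ratios yields that for unit vectors all the sequences $\{\int_{F_n}|\langle\pi(g)v_1,v_2\rangle|^2\,d\mu\}_n$ are asymptotically equal, which is the first assertion and fixes the normalization so that the common scalar equals $1$ for a scaling sequence. Feeding this back through the polarization identity in the fixed slot (the form $T_n(w,w')$ is conjugate-linear in $w$ and linear in $w'$) identifies the limit of $T_n(w,w')$ as $\langle w',w\rangle\,\mathrm{Id}$, which after relabeling is precisely the four-vector identity.

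For the final statement the same mechanism applies across the two representations. Fixing $v_2\in V_\pi$ and $w_2\in V_\rho$, define $T_n\colon V_\rho\to V_\pi$ by $\langle v_1,T_n w_1\rangle=\frac{1}{\sqrt{a_n b_n}}\int_{F_n}\langle\pi(g)v_1,v_2\rangle\overline{\langle\rho(g)w_1,w_2\rangle}\,d\mu$, where the geometric-mean normalization together with condition (i) for both $\pi$ and $\rho$ yields uniform boundedness. The identical unimodular substitution and condition (ii) give $\pi(h)^{-1}T_n\rho(h)-T_n\to 0$ in the weak operator topology, so any weak-operator limit $T$ satisfies $T\rho(h)=\pi(h)T$; as $\pi$ and $\rho$ are inequivalent irreducibles, Schur's lemma forces $T=0$, and since every subsequential limit is $0$ the full sequence of pairings tends to $0$, which is the claimed orthogonality.
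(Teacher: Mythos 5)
Your argument is correct and is essentially the proof the paper relies on: the paper does not prove Proposition \ref{aso} itself but imports it from Kazhdan--Yom Din \cite[Propositions 2.3 and 2.5]{kyd}, and your reconstruction --- averaged operators $T_n$ bounded via condition (i) plus uniform boundedness, asymptotic conjugation/intertwining invariance via condition (ii) and unimodularity, weak-operator limit points identified by Schur's lemma, normalization at $v_0$ pinning the scalar so that all subsequential limits agree and the full sequences converge, then polarization in each slot --- is exactly that argument. The only cosmetic points are that in a non-separable $V_\pi$ one should pass to weak-operator subnets rather than subsequences (the ``every subnet limit is the same'' logic is unaffected), and that your $O(a_n^{1/2})$ bound for the second Cauchy--Schwarz factor over $F_nh\Delta F_n$ is not literally given by condition (i) (which controls integrals over $F_n$); either translate $F_nh$ back to $F_n$ by the same substitution, or simply note that condition (ii) makes both factors $o(a_n^{1/2})$.
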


\subsection{Convergence to the braiding operator} Let $G$ be a locally compact group and let $(\pi,V_\pi)$ be a unitary representation of $G$. Let $V_\pi\otimes V_\pi$ denote the Hilbert tensor product and let $F$ be the operator on this space that takes $v\otimes w\to w\otimes v$, for all $v,w\in V_\pi$. The operator $F$ is called the \textit{flip} or the \textit{braiding} operator. The following result is a slight modification of the one due to Bendikov, Boyer and Pittet. 
\begin{prop}\label{braid}\cite[Proposition 1.1]{bbp} Let $(\pi,V_\pi)$ be a unitary representation of a locally compact group $G$ and $\{\mu_n\}_n$ be a family of bounded  Borel measures on $G$. Among the following statements (1) and (2) are equivalent and they both imply (3).

(1) There is a uniform bound on the operator norms
\begin{equation*}
\sup_{n\in\n}\|\int_G\pi(g^{-1})\otimes\pi(g)d\mu_n\|<\infty
\end{equation*}
and a dense subset $W\subset V_\pi$, such that for all $v_1,w_1,v_2,w_2\in W$
\begin{equation*}
\lim_{n\to\infty}\int_G\langle\pi(g)v_1,w_1\rangle\overline{\langle\pi(g)v_2,w_2\rangle}d\mu_n=\langle v_1,v_2\rangle\overline{\langle w_1,w_2\rangle}.
\end{equation*}

(2) In the weak operator topology,
\begin{equation*}
\lim_{n\to\infty}\int_G\pi(g^{-1})\otimes\pi(g)d\mu_n=F
\end{equation*}

(3) For all $v_1,v_2,w_1,w_2\in V_\pi$,
\begin{equation*}
\lim_{n\to\infty}\int_G\langle\pi(g)v_1,w_1\rangle\overline{\langle\pi(g)v_2,w_2\rangle}d\mu_n=\langle v_1,v_2\rangle\overline{\langle w_1,w_2\rangle}.
\end{equation*}
\end{prop}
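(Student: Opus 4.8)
The plan is to route all three statements through a single bridging identity that recasts the matrix-coefficient integrals of (1) and (3) as matrix coefficients of the operators $T_n := \int_G \pi(g^{-1})\otimes\pi(g)\,d\mu_n$ on $V_\pi\otimes V_\pi$. First I would record the computation: using $\langle a\otimes b, c\otimes d\rangle = \langle a,c\rangle\langle b,d\rangle$ on the Hilbert tensor product together with unitarity of $\pi$ (so that $\langle\pi(g^{-1})a,c\rangle = \overline{\langle\pi(g)c,a\rangle}$), one gets for all $v_1,v_2,w_1,w_2\in V_\pi$
\[
\langle T_n(w_2\otimes v_1),\, v_2\otimes w_1\rangle = \int_G\langle\pi(g)v_1,w_1\rangle\,\overline{\langle\pi(g)v_2,w_2\rangle}\,d\mu_n,
\]
while $\langle F(w_2\otimes v_1),\, v_2\otimes w_1\rangle = \langle v_1,v_2\rangle\,\overline{\langle w_1,w_2\rangle}$. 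Thus the expression whose limit is taken in (3) is exactly $\langle T_n\xi,\eta\rangle$ for the simple tensors $\xi = w_2\otimes v_1$, $\eta = v_2\otimes w_1$, and the asserted limit in (3) is $\langle F\xi,\eta\rangle$.

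Given this identity, (2) $\Rightarrow$ (3) is immediate, since weak operator convergence $T_n\to F$ yields $\langle T_n\xi,\eta\rangle\to\langle F\xi,\eta\rangle$ in particular for simple tensors. For (2) $\Rightarrow$ (1), the matrix-coefficient clause of (1) follows from (3) with $W = V_\pi$, and the uniform norm bound follows from the Banach--Steinhaus principle: a weakly operator convergent sequence is bounded in operator norm (apply uniform boundedness once to bound $\{T_n\xi\}_n$ for each fixed $\xi$, using $\|T_n\xi\| = \sup_{\|\eta\|\le1}|\langle T_n\xi,\eta\rangle|$, and once more to bound $\{T_n\}_n$). So only (1) $\Rightarrow$ (2) carries real content.

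For (1) $\Rightarrow$ (2) I would run a density-and-equicontinuity argument powered by the uniform bound $M := \sup_n\|T_n\| < \infty$. By the bridging identity, the hypothesis of (1) says precisely that $\langle T_n\xi,\eta\rangle\to\langle F\xi,\eta\rangle$ whenever $\xi,\eta$ are simple tensors with all legs in $W$; since both forms are sesquilinear, this convergence extends to the algebraic span $D$ of $\{w\otimes w' : w,w'\in W\}$. As $W$ is dense in $V_\pi$ and $\|w_1\otimes w_2 - v_1\otimes v_2\| \le \|w_1-v_1\|\,\|w_2\| + \|v_1\|\,\|w_2-v_2\|$, the subspace $D$ is dense in $V_\pi\otimes V_\pi$. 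For arbitrary $\xi,\eta\in V_\pi\otimes V_\pi$ I would pick $\xi',\eta'\in D$ close to them and use the splitting
\[
\langle T_n\xi,\eta\rangle - \langle F\xi,\eta\rangle = \big(\langle T_n\xi,\eta\rangle - \langle T_n\xi',\eta'\rangle\big) + \big(\langle T_n\xi',\eta'\rangle - \langle F\xi',\eta'\rangle\big) + \big(\langle F\xi',\eta'\rangle - \langle F\xi,\eta\rangle\big).
\]
The middle term tends to $0$ by convergence on $D$; the outer two are bounded, uniformly in $n$, by $M$ and $\|F\| = 1$ times the approximation errors $\|\xi-\xi'\|$ and $\|\eta-\eta'\|$. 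Taking $\limsup_n$ and then shrinking the errors gives $\langle T_n\xi,\eta\rangle\to\langle F\xi,\eta\rangle$, i.e. (2).

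The hard part is exactly this last extension: the estimate on the outer terms is uniform in $n$ only because of the operator-norm bound, which is precisely why the uniform bound is built into hypothesis (1) and why (1) is phrased on a mere dense subset rather than all of $V_\pi$. Everything else --- the sesquilinear bridging identity, the density of $D$, and the two uses of Banach--Steinhaus --- is routine, so once the identity is recorded the remaining write-up should be brief.
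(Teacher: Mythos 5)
Your proof is correct and matches the paper's route: the paper supplies no argument of its own for this proposition but cites \cite[Proposition 1.1]{bbp}, remarking only that starting from $\pi(g^{-1})\otimes\pi(g)$ (rather than $\pi(g)\otimes\pi(g^{-1})$) eliminates the change of variable $g\mapsto g^{-1}$, and with it the symmetry assumption on the $\mu_n$, from the proof of (2)$\Rightarrow$(3). Your bridging identity $\langle T_n(w_2\otimes v_1),v_2\otimes w_1\rangle=\int_G\langle\pi(g)v_1,w_1\rangle\overline{\langle\pi(g)v_2,w_2\rangle}\,d\mu_n$ realizes exactly that remark --- no inversion of $g$ occurs anywhere --- and the remaining steps (Banach--Steinhaus for the norm bound in (2)$\Rightarrow$(1), and the density-plus-uniform-bound $3\varepsilon$ argument for (1)$\Rightarrow$(2)) are precisely the standard argument of \cite{bbp} that the citation invokes.
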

\noindent Item (2) is the \textit{convergence to the braiding operator} which implies item (3), asymptotic Schur orthogonality. The first condition in item (1) aids in proving item (2) if item (3) is already know.
The original statement in \cite{bbp} considered averages of the operators $\pi(g)\otimes\pi(g^{-1})$ instead of $\pi(g^{-1})\otimes\pi(g)$. Because of this they had to use the change of variable $g\mapsto g^{-1}$ in the proof of (2)$\Rightarrow$(3), which was facilitated by their assumption that $\mu_n$ are symmetric measures. Since we start with the operator $\pi(g^{-1})\otimes\pi(g)$ we could remove the assumption that $\mu_n$ are symmetric. 


\section{Preliminaries on Heisenberg groups}\label{prelimheisenberg}
\subsection{Induced representation.} Let $H$ be a unimodular locally compact group and $N$ be a unimodular closed subgroup. Let $(\sigma,V_\sigma)$ be a unitary representation of $N$. The Hilbert space for the \textit{induced representation}  $\text{Ind}_N^H\sigma$ of $H$ is the space of $L^2$-sections of the bundle $H\times_NV_\sigma\to H/N$, whose total space is the quotient $H\times V_\sigma/\sim$ by the equivalence relation $(hn,v)\sim (h,\sigma(n)v)$, for all $h\in H,n\in N$ and $v\in V_\sigma$ and the projection is induced by projection to first coordinate of $H\times V_\sigma$. The left translation action induces actions of $H$ on $H/N$ and $H\times_N V_\sigma$ which commute with the projection map. Thus $H$ acts on this bundle via bundle isomorphisms. The induced representation is then given by \begin{equation*}
    (\text{Ind}_N^H\sigma(h)f)(xN):=h\cdot f(h^{-1}xN).
\end{equation*}
We will be interested in the case where $H$ is a semidirect product of the form $N\rtimes G$. In this case, the above bundle is isomorphic to the trivial bundle $G\times V_\sigma\to G$. An explicit bundle isomorphism is induced by the map
\begin{align}\label{triv}
    H\times_N V_\sigma&\to G\times V_\sigma\\ 
    [(n,g),v]&\mapsto (g,\sigma(g^{-1}\cdot n)v), \nonumber
\end{align}
whose inverse  is $(g,v)\mapsto [(1,g),v]$. The $H$ action on the LHS of (\ref{triv}) translates to the following $H$ action on the RHS: $(n,g)\cdot (x,v):=(gx,\sigma(gx)^{-1}v)$. Thus we may identify the representation space of $\text{Ind}_N^H\sigma$ with $L^2(G,V_\sigma)$ and the $H$ action given by 
\begin{equation*}
    (\text{Ind}_N^H\sigma(n,g)f)(x):=\sigma(x^{-1}\cdot n)f(g^{-1}x).
\end{equation*}

\subsection{Mackey machine for semidirect product of abelian groups}
Let $G$ be a locally compact abelian group and let $N$ be another such group on which $G$ acts by continuous automorphisms. The $G$ action on $N$ induces a $G$ action on the unitary dual $\widehat{N}$. Assume the existence of a Borel section, that is, there is a Borel set in $\widehat{N}$ which intersects each $G$-orbit in exactly one point. For each orbit choose a representative $\nu$ and consider the stabilizer $G_\nu$ of $\nu$ in $G$. For any irreducible representation $\rho$ of $G_\nu$ consider the representation $\sigma$ on $H_\nu=N\rtimes G_\nu\subset N\rtimes G$ given by $\sigma(n,g)=\nu(n)\rho(g)$. Then $\text{Ind}_{H_\nu}^H\sigma$ is an irreducible representation of $H$. This construction exhausts all irreducible representations of $H$. If we replace $\nu$ by $g\cdot\nu$ and $\rho$ by the representation of the new stabilizer $gH_\nu g^{-1}$ obtained by precomposing $\rho$ by conjugation by $g$, then the resulting induced representation is equivalent to the original.

\subsection{Description of unitary dual of Heisenberg group via Mackey machine}\label{secdual}
Let $K$ be a local field, that is, a non-discrete locally compact field. The  \textit{Heisenberg group} $H_n(K)$ is the following subgroup of $\text{GL}_{n+2}(K)$, written in block matrix form as
\begin{equation}\label{heidef}
 H_n(K):=\{\begin{pmatrix}1 & a^\top & c\\
0 & I_n & b\\
0 & 0 & 1
 \end{pmatrix}~|~a,b\in K^n, c\in K\},
\end{equation}
where $I_n$ denotes the $n\times n$ identity matrix. For brevity we will write $H$ instead of $H_n(K)$.  The matrix in (\ref{heidef}) will be denoted by the tuple $(a,b,c)\in K^n\times K^n\times K$ and we note the multiplication rule $(a_1,b_1,c_1)(a_2,b_2,c_2)=(a_1+a_2,b_1+b_2,c_1+c_2+a_1^\top b_2)$. This implies $H\cong (K^n\times K)\rtimes K^n$, where the $K^n$ action on $K^n\times K$ is given by $a\cdot (b,c)=(b,c+ a^\top b)$. The Haar measure on $H$ coincides with the Haar measure on $K^{2n+1}$, where $H$ is identified to $K^{2n+1}$ as a set. Since $H$ is of the form $N\rtimes G$, where $N:=K^n\times K$ and $G:=K^n$, we may apply the Mackey machine to find every irreducible unitary representation of $H$. For any $m\in\n$, the unitary dual of $K^m$ can be described as follows \cite[Ch.2, \S5, Thm.3]{weil}. 
Fix a non-trivial character $\chi$ of $K$. Then any character of $K^m$ is of the form $\chi_y(x):=\chi( y^\top x)$, where $y\in K^m$. This gives an identification of $\widehat{K^m}$ with $K^m$. In particular for $(x,y)\in K^n\times K$, we will denote the corresponding character by $\chi_{(x,y)}$. The action of $G=K^n$ on $\widehat{N}$  is given by 
\begin{equation*}
    (a\cdot\chi_{(x,y)})(b,c)=\chi_{(x,y)}(b,c-a^\top b)=\chi(x^\top b+cy-a^\top b y)=\chi_{(x-ay,y)}(b,c).
\end{equation*}
Thus the orbits are either points on the subspace $y=0$ or hyperplanes of the form $y=t$ where  $t\ne 0$. The union of the line $x=0$ and the subspace $y=0$ is a Borel section. For the singleton orbits we have $G_{(x,0)}=G\cong K^n$ and any irreducible representation is a character $a\mapsto\chi(z^\top a)$ for some $z\in K^n$. Thus the irreducible representations of $H$ obtained from these singleton orbits are one dimensional of the form $\rho_{z,x}$, where $z,x\in K^n$, given by
\begin{equation}\label{1d}
    \rho_{z,x}(a,b,c):=\chi(z^\top a+x^\top b).
\end{equation}

\noindent For the orbit given by the hyperplane $y=t$, we choose the representative $(0,t)$. Then $G_{(0,t)}=\{0\}$ and hence $H_{(0,t)}=N$ and $\sigma=\nu_t$ is the character $(b,c)\mapsto\chi(tc)$. We will denote $\text{Ind}_N^H\nu_t$ by $\pi_t$. Then Mackey theory implies that all infinite dimensional unitary irreducible representations of $H$ are on $L^2(K^n)$ and are of the form
\begin{equation}\label{infirrephei}
    (\pi_t(a,b,c)f)(x)=\chi(t(c-x^\top b))f(x-a).
\end{equation}

\subsection{The Fourier-Wigner Transform}\label{secfw}  The reference for this subsection is \cite[Chapter 1, \S4]{folland2}, where only Heisenberg groups over $\r$ are considered. We will change $\r$ to a local field $K$ and note that all the arguments still go through. What we will call the Fourier-Wigner Transform will be slightly different from Folland's definition. The transform is motivated by consideration of the matrix coefficient of $\pi_t$, as defined in (\ref{infirrephei}).
\begin{align*}
\langle\pi_t(a,b,c)f_1,f_2\rangle
=&\int_{K^n}\chi(t(c-x^\top b))f_1(x-a)\overline{f_2(x)}dx\\
=~&\chi(tc)\int_{K^n} \chi(-tx^\top b)f_1(x-a)\overline{f_2(x)}dx  
\end{align*}
In view of the isometric isomorphism $L^2(K^n)\otimes L^2(K^n)\cong L^2(K^{2n})$, which sends $f_1\otimes f_2$ to the function $(x,y)\mapsto f_1(x)f_2(y)$, we consider the transform $V$ on $L^2(K^{2n})$ given by $V(\phi)(a,b):=\int_{K^n}\chi(-x^Tb)\phi(x-a,x)dx$. We will call $V$ the \textit{Fourier-Wigner transform}.
We claim that $V$ is a unitary operator. This becomes clear if we see $V$ as the composition of the following two operators, which are clearly unitary. The first one is obtained by precomposing functions in $L^2(K^{2n})$ by the automorphism of $K^{2n}$ given by $(a,b)\mapsto (-a+b,b)$. We claim that this automorphism is measure preserving. The pull back of a given Haar measure of a group $G$ by an automorphism $\phi$ is again a Haar measure and hence is positive scalar multiple of the given Haar measure. This scalar is called the \textit{modulus of the automorphism} and is denoted by $\text{mod}_G(\phi)$. If $T$ is an invertible linear transform of $K^m$, where $K$ is a local field, then $\text{mod}_{K^m}(T)=\text{mod}_K(\det T)$ \cite[Cor.3 of Thm.3, Ch.1]{weil}. This proves the claim and shows that corresponding operator is unitary. The second operator is better seen as one acting on $L^2(K^n)\otimes L^2(K^n)$ which sends $f_1\otimes f_2$ to $f_1\otimes\widehat{f_2}$, where the Fourier transform $\widehat{f_2}$ of $f_2$ is again a function on $K^n$ via the identification of $\widehat{K^n}$ with $K^n$ using $\chi$. We may adjust the Haar measure on $K$ so that the Fourier transform on $L^2(K^n)$ continues to be an isometry. This proves that the second operator is unitary too. Note that the same arguments hold for the following slightly modified operator.
\begin{equation}\label{fw2}
    V'(\phi)(a,b):=\int_{K^n}\chi(-x^Tb)\phi(a-x,x)dx.
\end{equation}

\noindent The unitarity of the Fourier-Wigner transform has the following consequence for the matrix coefficients of $\pi_t$.
\begin{equation}\label{fwimp}
\int_{K^n}\int_{K^n}|\langle\pi_t(a,b,c)f_1,f_2\rangle|^2da~db
=\frac{1}{\text{mod}_K(t)^n}\|f_1\|^2\|f_2\|^2.
\end{equation}

\section{Finite dimensional representations of amenable groups}\label{secfindim}
\noindent By a result of Greenleaf \cite[Theorem 3.1]{greenleaf},
any continuous action of a locally compact amenable group $G$ on a locally compact space $Z$ is amenable. Moreover, if $Z$ admits a non-zero invariant measure class then the corresponding space $L^\infty(Z)$ admits a $G$ invariant mean. Applying this fact to the action of $G\times G$ on $G$ given by $(g,h)\cdot x:=gxh^{-1}$, we note that $G$ admits a bi-invariant mean $m$ on $L^\infty(G)$. The following lemma comes out of the proof of Schur orthogonality relations for compact groups, by replacing compactness with amenability and the Haar measure with a bi-invariant mean. We include the proof for completeness sake.
\begin{lemma}\label{protoaso}
Let $G$ be a locally compact amenable group with a bi-invariant mean $m$. Let $(\pi,V_\pi)$ and $(\rho,V_\rho)$ be  irreducible unitary representations of $G$. Then for all $v_1,v_2\in V_\pi$ and $w_1,w_2\in V_\rho$,
 \begin{equation*}
m(\langle\pi(g)v_1,v_2\rangle\overline{\langle\rho(g)w_1,w_2\rangle})=\begin{cases}
    c_\pi\langle v_1,w_1\rangle\overline{\langle v_2,w_2\rangle} & \text{if } \pi=\rho,\\
    0 & \text{if } \pi\not\cong\rho,
\end{cases}
 \end{equation*}
where  $c_\pi\ge 0$ is a constant that is not equal to $0$ if and only if $\pi$ is finite dimensional, in which case $c_\pi=1/\dim V_\pi$.
\end{lemma}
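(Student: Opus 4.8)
The plan is to imitate the classical proof of Schur orthogonality for compact groups, replacing integration against Haar measure by evaluation of the bi-invariant mean $m$, and replacing the averaged intertwining operator by a weak, sesquilinear-form version that the mean can actually see. Fix $v_1\in V_\pi$ and $w_1\in V_\rho$. For $v\in V_\pi$ and $w\in V_\rho$ the function $g\mapsto\langle\pi(g)v_1,v\rangle\,\langle\rho(g^{-1})w,w_1\rangle$ is bounded and continuous with sup-norm at most $\|v_1\|\,\|w_1\|\,\|v\|\,\|w\|$, hence lies in $L^\infty(G)$ and may be fed to $m$. First I would check that
\[
(w,v)\;\longmapsto\; m\big(\langle\pi(g)v_1,v\rangle\,\langle\rho(g^{-1})w,w_1\rangle\big)
\]
is a bounded sesquilinear form (linear in $w$, conjugate-linear in $v$, bounded since $|m(f)|\le\|f\|_\infty$). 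By the Riesz representation theorem it equals $\langle Sw,v\rangle$ for a unique bounded operator $S=S_{v_1,w_1}\colon V_\rho\to V_\pi$ with $\|S\|\le\|v_1\|\,\|w_1\|$. Taking $v=v_2$, $w=w_2$ and using $\langle\rho(g^{-1})w_2,w_1\rangle=\overline{\langle\rho(g)w_1,w_2\rangle}$, the operator is built precisely so that $\langle Sw_2,v_2\rangle$ is the quantity to be computed.

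Next I would show that $S$ intertwines $\rho$ and $\pi$, i.e. $\pi(h)S=S\rho(h)$ for all $h$. Writing out $\langle S\rho(h)w,v\rangle$ and $\langle\pi(h)Sw,v\rangle=\langle Sw,\pi(h^{-1})v\rangle$, both become $m$ applied to the function $g\mapsto\langle\pi(g)v_1,v\rangle\,\langle\rho(g^{-1})w,w_1\rangle$ transported by the substitution $g\mapsto hg$; their equality is exactly left-invariance of $m$, once one uses $(hg)^{-1}h=g^{-1}$. Schur's lemma for irreducible unitary representations then settles the off-diagonal case: if $\pi\not\cong\rho$ then $\mathrm{Hom}_G(V_\rho,V_\pi)=0$, so $S=0$ and the mean vanishes. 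If $\pi=\rho$, Schur gives $S=\lambda I$ for a scalar $\lambda=\lambda(v_1,w_1)$, whence $\langle Sw_2,v_2\rangle=\lambda\,\langle w_2,v_2\rangle=\lambda\,\overline{\langle v_2,w_2\rangle}$.

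It then remains to identify $\lambda(v_1,w_1)$. It is a bounded sesquilinear form in $(v_1,w_1)$, so by Riesz $\lambda(v_1,w_1)=\langle v_1,Cw_1\rangle$ for a bounded operator $C$ on $V_\pi$. Feeding $\pi(h)v_1,\pi(h)w_1$ into the defining mean and using right-invariance (the integrand becomes $\Psi(gh)$ with $\Psi(g)=\langle\pi(g)v_1,v\rangle\langle\pi(g^{-1})w,w_1\rangle$) yields $\lambda(\pi(h)v_1,\pi(h)w_1)=\lambda(v_1,w_1)$; unitarity of $\pi$ then forces $C$ to commute with every $\pi(h)$, so Schur gives $C=c_\pi I$ and $\lambda(v_1,w_1)=c_\pi\langle v_1,w_1\rangle$. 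This produces the diagonal formula with the stated structure, $m(\langle\pi(g)v_1,v_2\rangle\overline{\langle\pi(g)w_1,w_2\rangle})=c_\pi\langle v_1,w_1\rangle\overline{\langle v_2,w_2\rangle}$.

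Finally I would pin down $c_\pi$. Putting $v_1=v_2=w_1=w_2$ equal to a unit vector $v$ gives $c_\pi=m(|\langle\pi(g)v,v\rangle|^2)\ge0$ by positivity of $m$. If $\dim V_\pi=d<\infty$, then for an orthonormal basis $\{e_i\}$ and a fixed $e_j$ one has $\sum_i|\langle\pi(g)e_i,e_j\rangle|^2=\|\pi(g^{-1})e_j\|^2=1$ identically in $g$; applying $m$ together with the diagonal formula gives $d\,c_\pi=m(1)=1$, so $c_\pi=1/\dim V_\pi\ne0$. If $\dim V_\pi=\infty$, Bessel's inequality gives $\sum_{i=1}^N|\langle\pi(g)e_i,w\rangle|^2\le1$ pointwise for orthonormal $e_1,\dots,e_N$ and a unit vector $w$; monotonicity of $m$ then yields $N c_\pi\le1$ for every $N$, forcing $c_\pi=0$. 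The one point requiring care throughout is that $m$ is only a finitely additive state and not given by a measure, so every averaging and change-of-variable step of the classical Haar-integral argument must be recast purely through the Riesz/sesquilinear-form construction and through the order, linearity, and bi-invariance properties of $m$; that reformulation is where the real work of the proof lies.
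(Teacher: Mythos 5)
Your proof is correct and takes essentially the same route as the paper: both run the classical compact-group Schur orthogonality argument with the Haar integral replaced by the bi-invariant mean $m$, applying Schur's lemma once in each pair of variables (left invariance of $m$ for one, right invariance for the other) and then using Bessel/Parseval with orthonormal families to force $c_\pi=1/\dim V_\pi$ in the finite-dimensional case and $c_\pi=0$ otherwise. The only difference is presentational: you unpack, via the Riesz representation theorem, the explicit intertwiner $S$ and commuting operator $C$ that the paper compresses into ``Schur's lemma applies'' for the invariant quadlinear form $D$, and where you apply Schur a second time to $C$ the paper instead compares its two scalar factorizations at a common unit vector---the same argument in different clothing.
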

\begin{proof}
Let $\overline{V_\pi}$ be denote a Hilbert space with the same additive structure as $V_\pi$, but with scalar multiplication and inner product given by $\lambda\cdot v:=\overline{\lambda}v$ and $(v,w):=\langle w,v\rangle$. Consider the quadlinear form 
\begin{equation*}
D: V_\pi\times\overline{V_\pi}\times \overline{V_\rho}\times V_\rho \to\c,~ D(v_1,v_2,w_1,w_2):=m(\langle\pi(g)v_1,v_2\rangle\overline{\langle\rho(g)w_1,w_2\rangle}).
\end{equation*}
For fixed vectors $v_2\in V_\pi$ and $w_2\in V_\rho$ the form $D(\underline{\hspace{.2cm}},v_2,\underline{\hspace{.2cm}},w_2)$ is $G$-invariant and Schur's lemma applies. If $\pi\not\cong\rho$ then for any $v_1\in V_\pi$ and $w_1\in V_\rho$, $D(v_1,v_2,w_1,w_2)=0$, as required. If $\pi=\rho$ then there exists a constant $c_{v_2,w_2}$ such that for any  $v_1,w_1\in V_\pi$, 
\begin{equation}\label{13}
    D(v_1,v_2,w_1,w_2)=c_{v_2,w_2}\langle v_1,w_1\rangle.
\end{equation}
Similarly fixing $v_1,w_1$, we get a constant $d_{v_1,w_1}$ such that for any $v_2,w_2\in V_\pi$,
\begin{equation}\label{24}
D(v_1,v_2,w_1,w_2)=d_{v_1,w_1}\overline{\langle v_2,w_2\rangle}.
\end{equation}
Let $v_0\in V_\pi$ be a unit vector. Putting $v_2=v_0=w_2$ in (\ref{13}) and (\ref{24}) we get $d_{v_1,w_1}=c_{v_0,v_0}\langle v_1,w_1\rangle$. Putting $c_\pi:=c_{v_0,v_0}$ and substituting this in (\ref{24}) we get
\begin{equation}\label{preaso}
    D(v_1,v_2,w_1,w_2)=c_\pi\langle v_1,w_1\rangle\overline{\langle v_2,w_2\rangle}.
\end{equation}
Now we follow the argument in \cite[\S3]{bbp}. Let $e_1,\cdots,e_k$ be an orthonormal family in $V_\pi$. For $0\ne v\in V_\pi$, 
\begin{equation}\label{onexp}
\sum_{i=1}^k|\langle\pi(g)v,e_i\rangle|^2\le\|\pi(g)v\|=\|v\|^2.   
\end{equation}
Applying $m$ on both sides and using (\ref{preaso}) we get,
\begin{equation*}
kc_\pi\|v\|^2=\sum_{i=1}^kc_\pi\|v\|^2=\sum_{i=1}^km(|\langle\pi(g)v,e_i\rangle|^2)\le\|v\|^2.   
\end{equation*}
If $\pi$ is infinite dimensional then we get that $c_\pi\le 1/k$ for any $k\in\n$, hence $c_\pi=0$. If $\pi$ is finite dimensional, then for $k=\dim V_\pi$ the inequality (\ref{onexp}) is equality. Hence we get $c_\pi=1/\dim V_\pi$. 
\end{proof}

\noindent Another result of Greenleaf says that if a continuous action of an amenable locally compact group $G$ on a locally compact space $Z$ admits an invariant measure $\nu$, then it satisfies the F\o lner condition, that is, given $\epsilon>0$ and a compact $K\subset G$ there exists compact $U\subset Z$ such that $\nu(gU\Delta U)/\nu(U)<\epsilon$ for all $g\in K$. Again applying this facts to the action $(g,h)\cdot x:=gxh^{-1}$ of $G\times G$ on $G$, this time with the assumption that $G$ is unimodular second-countable, we get that $G$ admits a two sided F\o lner sequence (as noted in \cite{kyd}).  Henceforth, $G$ is always assumed to be unimodular and any F\o lner sequence for $G$ is assumed to be two sided. To avoid confusion with F\o lner sequences associated to c-tempered representations we will use the term \textit{group} F\o lner sequence to allude to F\o lner sequences in the usual sense for amenable groups. Group F\o lner sequences are related to bi-invariant means in the following way. Let $\{F_n\}_n$ be any group F\o lner sequence of $G$. For each $n\in\n$, $f\mapsto(1/\mu(F_n))\int_{F_n}fd\mu$ is a positive linear functional on $L^\infty(G)$ which takes the constant function $1$ to $1$. By the Banach-Alaoglu Theorem, passing on to a subsequence if necessary, there exists a mean $m$ on $L^\infty(G)$ such that for all $f\in L^\infty(G)$, $\lim_{n\to\infty}(1/\mu(F_n))\int_{F_n}fd\mu=m(f)$. The F\o lner property of $\{F_n\}_n$ implies that $m$ is invariant. Applying this approximation to the statement of Lemma \ref{protoaso} we get the following result.

\begin{theorem}\label{findim}
Let $G$ be a unimodular second countable amenable group with Haar measure $\mu$ and let  $(\pi,V_\pi)$ be a unitary irreducible representation of $G$. There exists a group F\o lner sequence $\{F_n\}_n$ such that $\pi$ is c-tempered with F\o lner sequence $\{F_n\}_n$ and the sequences $\{\int_{F_n}|\langle\pi(g)v,w\rangle|^2d\mu(g)\}_n$, for all $0\ne v,w\in V_\pi$, are asymptotically equivalent to $\{\mu(F_n)\}_n$ if and only if $\pi$  is finite dimensional. Hence, for finite dimensional irreducible unitary  representations $(\pi,V_\pi)$ and $(\rho,V_\rho)$, for all  $v_1,v_2\in V_\pi$ and $w_1,w_2\in V_\rho$,
\begin{equation}\label{fdaso}
\lim_{n\to\infty}\frac{1}{\mu(F_n)}\int_{F_n}\langle\pi(g)v_1,v_2\rangle\overline{\langle\rho(g)w_1,w_2\rangle}d\mu(g)=\begin{cases}
 \frac{1}{\dim V_\pi}\langle v_1,w_1\rangle\overline{\langle v_2,w_2\rangle} & \text{if }\pi=\rho,\\
 0&\text{if }\pi\not\cong\rho.
\end{cases}
\end{equation}
Moreover, in the weak operator topology \begin{equation*}
    \lim_{n\to\infty}\frac{1}{\mu(F_n)}\int_{F_n}\pi(g^{-1})\otimes\pi(g)~d\mu(g)=\frac{1}{\dim V_\pi}F
\end{equation*}
\end{theorem}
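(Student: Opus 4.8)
The plan is to prove the three assertions in sequence, leveraging the groundwork already laid. First I would establish the \emph{if and only if} statement characterizing when $\pi$ is c-tempered with the asymptotic equivalence to $\{\mu(F_n)\}_n$. Starting from any group F\o lner sequence (which exists by Greenleaf's theorem for unimodular second countable amenable $G$), I would pass to a subsequence so that the associated averaging functionals converge to a bi-invariant mean $m$, as described just before the theorem. Then Lemma \ref{protoaso} gives, for unit vectors $v,w \in V_\pi$, that $m(|\langle\pi(g)v,w\rangle|^2) = c_\pi\|v\|^2\|w\|^2$ with $c_\pi = 1/\dim V_\pi$ when $\pi$ is finite dimensional and $c_\pi = 0$ otherwise. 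Reading this as $\lim_{n\to\infty}(1/\mu(F_n))\int_{F_n}|\langle\pi(g)v,w\rangle|^2 d\mu(g) = c_\pi$, the finite dimensional case yields a strictly positive limit, so each such sequence is asymptotically equivalent to $\{\mu(F_n)\}_n$; the infinite dimensional case forces the limit to be $0$, which precludes asymptotic equivalence to $\{\mu(F_n)\}_n$. This establishes both directions of the equivalence.

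Next I would verify the two defining conditions of c-temperedness in the finite dimensional case. Condition (i), in the reformulation without a chosen unit vector, is exactly the asymptotic equivalence of all the sequences $\{\int_{F_n}|\langle\pi(g)v_1,v_2\rangle|^2 d\mu\}_n$, which follows from the positive common limit $c_\pi\|v_1\|^2\|v_2\|^2$ computed above (after normalizing to unit vectors). For condition (ii), I would bound the symmetric difference integral: since matrix coefficients of a unitary representation are bounded by $\|v_1\|\|v_2\|$, the numerator $\sup_{g_1,g_2\in C}\int_{g_2^{-1}F_ng_1\Delta F_n}|\langle\pi(g)v_1,v_2\rangle|^2 d\mu$ is dominated by $\|v_1\|^2\|v_2\|^2 \sup_{g_1,g_2\in C}\mu(g_2^{-1}F_ng_1\Delta F_n)$, and the (two-sided) F\o lner property makes this $o(\mu(F_n))$. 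Dividing by a scaling sequence asymptotically equivalent to $\mu(F_n)$ and invoking Lemma \ref{zerolim} gives condition (ii).

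Having established c-temperedness, the asymptotic Schur orthogonality relation \eqref{fdaso} follows from Proposition \ref{aso}: the diagonal case $\pi=\rho$ comes from the first displayed identity there with scaling sequence $\{\dim V_\pi \cdot \mu(F_n)\}_n$ (so that $a_n = \mu(F_n)/c_\pi = \dim V_\pi \cdot \mu(F_n)$), and the off-diagonal case $\pi\not\cong\rho$ from the second, provided both $\pi$ and $\rho$ are c-tempered with the \emph{same} F\o lner sequence. This last point is the step I expect to require the most care: the F\o lner sequence and the mean $m$ it induces must be chosen uniformly so that Lemma \ref{protoaso} applies to both representations simultaneously, which is fine since a single group F\o lner sequence (and its limiting mean) serves all representations at once. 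Finally, for the convergence to the braiding operator, I would apply Proposition \ref{braid} with $\mu_n = (1/\mu(F_n))1_{F_n}d\mu$: statement (3) of that proposition is precisely the diagonal case of \eqref{fdaso} rescaled by $\dim V_\pi$, giving the limit $\langle v_1,v_2\rangle\overline{\langle w_1,w_2\rangle}$; together with the uniform operator norm bound from the first part of (1)—which holds because the operators $\int_G \pi(g^{-1})\otimes\pi(g)\,d\mu_n$ have norm at most $\mu_n(G)=1$—this yields convergence to $F$ in the weak operator topology, hence after dividing by $\dim V_\pi$ the stated limit $(1/\dim V_\pi)F$.
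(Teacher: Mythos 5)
Your proposal is correct and follows the paper's skeleton: Lemma \ref{protoaso} evaluated against a bi-invariant mean obtained as a weak-$*$ limit of F\o lner averages (the paper's "discussion preceding the theorem"), the trivial bound $|\langle\pi(g)v_1,v_2\rangle|\le\|v_1\|\|v_2\|$ combined with the two-sided F\o lner property for condition (ii) of c-temperedness, and Proposition \ref{braid} together with the norm bound $\|\int_G\pi(g^{-1})\otimes\pi(g)\,d\mu_n\|\le\mu_n(G)$ for the braiding statement. Two routing differences are worth noting. For the \emph{only if} direction you argue via $c_\pi=0$: along the constructed subsequence the normalized integrals tend to $0$, so asymptotic equivalence with $\{\mu(F_n)\}_n$ fails; the paper instead assumes the conjunction holds, invokes Proposition \ref{aso} to get a scaling sequence $\{a_n\}_n$ asymptotically equivalent to $\{\mu(F_n)\}_n$, and reruns the Bessel-inequality step of Lemma \ref{protoaso} with $\limsup_{n}(1/a_n)\int_{F_n}$ to conclude $k\le\limsup_n\mu(F_n)/a_n<\infty$. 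Your version suffices for the theorem as stated (the constructed sequence witnesses the "iff"), and it upgrades to the paper's sequence-independent form by extracting a mean-converging subsequence from an arbitrary F\o lner sequence, since a limit $0$ along a subsequence already kills the $\liminf$ demanded in (\ref{seqeqrel}). Second, you derive the off-diagonal case of \eqref{fdaso} from Proposition \ref{aso}, which requires both representations to be c-tempered with the \emph{same} F\o lner sequence; the paper gets it directly from the $\pi\not\cong\rho$ case of Lemma \ref{protoaso} applied to the same mean, which sidesteps that bookkeeping. Your remark that one sequence "serves all representations at once" should be read per pair $(\pi,\rho)$: the sequential Banach--Alaoglu extraction only controls a separable family of functions (a gloss the paper itself shares).

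One slip to correct: the scaling sequence for finite-dimensional $\pi$ is $a_n=c_\pi\,\mu(F_n)=\mu(F_n)/\dim V_\pi$, not $\dim V_\pi\cdot\mu(F_n)$; by definition a scaling sequence is asymptotically equal to $\int_{F_n}|\langle\pi(g)v_1,v_2\rangle|^2d\mu$ for unit vectors, whose normalized limit is $c_\pi=1/\dim V_\pi$, so you inverted $c_\pi$. With your choice of $a_n$, the Proposition \ref{aso} rederivation of the diagonal case of \eqref{fdaso} comes out off by a factor of $(\dim V_\pi)^2$. The error is harmless to the overall argument, since your first paragraph already obtains the diagonal limit with the correct constant directly from the mean, but the stated $a_n$ should be fixed.
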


\begin{proof} 
The if part and (\ref{fdaso}) follows from Lemma \ref{protoaso} and the discussion preceding this theorem.  The proof of the only if part is a slight modification of that of the corresponding part of Lemma 3.1. 
 By Proposition \ref{aso} there exists a sequence $\{a_n\}_n$, which is asymptotically equivalent to $\{\mu(F_n)\}_n$, such that for all $v_1,v_2,w_1,w_2\in V_\pi$,
\begin{equation}\label{asofolfol}
\lim_{n\to\infty}\frac{1}{a_n}\int_{F_n}\langle\pi(g)v_1,v_2\rangle\overline{\langle\pi(g)w_1,w_2\rangle}d\mu(g)=
\langle v_1,w_1\rangle\overline{\langle v_2,w_2\rangle}. 
\end{equation}
Given $0\ne v\in V_\pi$ and a orthonormal vectors $e_1,\cdots,e_k$ in  $V_\pi$, we have
\begin{equation*}
\sum_{i=1}^k|\langle\pi(g)v,e_i\rangle|^2\le\|\pi(g)v\|=\|v\|^2.   
\end{equation*}
Applying $\limsup_{n\to\infty}(1/a_n)\int_{F_n}~~d\mu$ on both sides and using (\ref{asofolfol}) we get,
\begin{equation*}
k\|v\|^2\le(\limsup_{n\to\infty}\frac{\mu(F_n)}{a_n})\|v\|^2.   
\end{equation*}
Thus $k$ is bounded above and hence $\pi$ is finite dimensional.

\vspace{0.2cm}
\noindent For the convergence to braiding operator, by Proposition \ref{braid}, it is enough to show that $\sup_{n\in\n}\|(1/\mu(F_n))\int_{F_n}\pi(g^{-1})\otimes\pi(g)~d\mu(g)\|$ $<\infty$. But this follows by taking the norm inside the integral and noting that $\pi(g^{-1})\otimes\pi(g)$ being unitary has operator norm equal to $1$. 
\end{proof}

\section{Results for Heisenberg groups}\label{secheisenberg}
\noindent All local fields $K$ admit a norm satisfying $|xy|=|x||y|$, for all $x,y\in K$. The ones which satisfy the \textit{ultrametric property}, given by $|x+y|\le\max\{|x|,|y|\}$ for all $x,y\in K$, are called \textit{non-archimedean} and the rest are called \textit{archimedean}. We fix such a norm on $K$ and for any integer $n>0$, we work with the $\ell^\infty$ norm on $K^n$. Note that with the $\ell^\infty$ norm if $x,y\in K^n$ then $|x^\top y|\le n\|x\|\|y\|$, in fact in the non-archimedean case we have $|x^\top y|\le \|x\|\|y\|$.  Moreover if $|\cdot|$ satisfies the ultrametric property then it continues to hold for the $\ell^\infty$ norm on $K^n$. Balls of radius $r$ and centre $0$ in $K^n$ will be denoted by $B(r)$. Recall that the Heisenberg group $H_n(K)$ can be identified as a set with $K^n\times K^n\times K$. In the result below we will describe subsets of $H_n(K)$ in terms of this identification. The Haar measure on $H_n(K)$ will be denoted by $\mu$ and that on $K^l$ by $\mu_{K^l}$, for any $l$.
\begin{theorem}\label{hei}
Let $\{r_m\}_m$ be any sequence of positive real numbers such that $r_m\to\infty$. 

i. All irreducible unitary representations of $H_n(K)$ are c-tempered with F\o lner sequence $F_m:=B(r_m)\times B(r_m)\times B(r_m^2)$.

ii. Let $\rho_{z,x}$ and $\pi_t$ be the irreducible representations of $H_n(K)$ as defined in (\ref{1d}) and (\ref{infirrephei}). If $(z_1,x_1)\ne (z_2,x_2)$ then 
\begin{equation*}
    \lim_{m\to\infty}\frac{1}{\mu(F_m)}\int_{F_m}\rho_{z_1,x_1}\overline{\rho_{z_2,x_2}}d\mu=0. 
\end{equation*}
For any $f_1,f_2,f_3,f_4\in L^2(K^n)$,
\begin{align*}
    &\lim_{m\to\infty}\frac{1}{\mu_K(B(r_m^2))}\int_{F_m}\langle\pi_{t_1}(a,b,c)f_1,f_2\rangle\overline{\langle\pi_{t_2}(a,b,c)f_3,f_4\rangle}d\mu(a,b,c)\\
    =~&\begin{cases}\frac{1}{\emph{mod}_K(t_1)^n}
    \langle f_1,f_3\rangle\overline{\langle f_2,f_4\rangle,} &\text{if } t_1=t_2\\
     0 & \text{otherwise}. 
    \end{cases}
\end{align*}
For any $f_1,f_2\in L^2(K^n)$ and any $t\in K^\times, (z,x)\in K^n$, we have
\begin{equation*}
    \lim_{m\to\infty}\frac{1}{\mu_{K^n}(B(r_m))\mu_K( B(r_m^2))}\int_{F_m}\langle\pi_t(a,b,c)f_1,f_2\rangle\overline{\rho_{z,x}(a,b,c)}~d\mu(a,b,c)=0.
\end{equation*}

iii. In weak operator topology
\begin{equation*}
\lim_{m\to\infty}\frac{1}{\mu_K(B(r_m^2))}\int_{F_m}\pi_t(g^{-1})\otimes\pi_t(g)~d\mu(g)=\frac{1}{\emph{mod}_K(t)^n}F.
\end{equation*}
\end{theorem}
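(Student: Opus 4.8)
The plan is to handle the two families of irreducibles — the one-dimensional $\rho_{z,x}$ of (\ref{1d}) and the infinite-dimensional $\pi_t$ of (\ref{infirrephei}) — by exploiting two features of $F_m=B(r_m)\times B(r_m)\times B(r_m^2)$: the absolute value of a matrix coefficient of $\pi_t$ is independent of the central coordinate $c$, and the Fourier--Wigner transform computes the $(a,b)$-integral of its square through (\ref{fwimp}). For the infinite-dimensional case of part (i), since $|\langle\pi_t(a,b,c)f_1,f_2\rangle|$ does not depend on $c$, I would factor
\[
\int_{F_m}|\langle\pi_t(g)f_1,f_2\rangle|^2\,d\mu=\mu_K(B(r_m^2))\int_{B(r_m)\times B(r_m)}|\langle\pi_t(a,b,c)f_1,f_2\rangle|^2\,da\,db,
\]
and let $m\to\infty$: by (\ref{fwimp}) the $(a,b)$-integral increases monotonically to $\text{mod}_K(t)^{-n}\|f_1\|^2\|f_2\|^2$. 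This identifies $\{\mu_K(B(r_m^2))\}_m$ as a scaling sequence and gives condition (i) at once, and the same computation shows that $\Phi(a,b):=|\langle\pi_t(a,b,c)f_1,f_2\rangle|^2$ lies in $L^1(K^{2n})$. For condition (ii) the group law forces $g_2^{-1}F_mg_1$ to differ from $F_m$ only by a bounded translation in the $a,b$ coordinates and by a translation of size $O(r_m)$ in $c$ (the off-diagonal term $a^\top b$ being of order $r_m$ on $F_m$); thus the symmetric difference splits into a boundary shell in the $(a,b)$ variables times a full $c$-fibre, and a bulk $(a,b)$-region times a $c$-shell of measure $O(r_m)$. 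After dividing by $\mu_K(B(r_m^2))$ the first piece is bounded by $\int_{\text{shell}}\Phi$, which tends to $0$ because $\Phi\in L^1$ and the shell escapes every compact set, while the second is bounded by $O(r_m)\,\mu_K(B(r_m^2))^{-1}\|\Phi\|_{L^1}\to0$.

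For the one-dimensional $\rho_{z,x}$ the coefficients have constant modulus, so both conditions reduce to $\{F_m\}_m$ being a two-sided group F\o lner sequence; the only delicate point is again that the central radius $r_m^2$ dominates the commutator distortion $O(r_m)$, so that $\mu(g_2^{-1}F_mg_1\Delta F_m)/\mu(F_m)=O(1/r_m)\to0$. This also shows that the scaling sequence of $\rho_{z,x}$ is $\{\mu(F_m)\}_m$. With part (i) established, part (ii) is a direct application of Proposition \ref{aso}: the $\rho_{z_1,x_1},\rho_{z_2,x_2}$ are inequivalent when $(z_1,x_1)\neq(z_2,x_2)$, the $\pi_t$ have pairwise distinct central characters $c\mapsto\chi(tc)$ and so are inequivalent for distinct $t$, and each $\pi_t$ is inequivalent to every $\rho_{z,x}$. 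Feeding the scaling sequences $\{\mu(F_m)\}_m$ (for $\rho$) and $\{\text{mod}_K(t)^{-n}\mu_K(B(r_m^2))\}_m$ (for $\pi_t$) into Proposition \ref{aso} yields all three displayed limits, the explicit constant $\text{mod}_K(t_1)^{-n}$ arising from the scaling sequence of $\pi_{t_1}$.

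The substantive part is (iii). By Proposition \ref{braid} it suffices to produce the limit in its item (3) — which is exactly the $t_1=t_2$ case of part (ii) read for the measures $d\mu_m=\mu_K(B(r_m^2))^{-1}\mathbf 1_{F_m}\,d\mu$ — together with a uniform bound on the operator norms $\|T_m\|$, where $T_m:=\mu_K(B(r_m^2))^{-1}\int_{F_m}\pi_t(g^{-1})\otimes\pi_t(g)\,d\mu$. The hard part is precisely this uniform bound: moving the norm inside the integral only gives $\mu_m(H)=\mu_{K^n}(B(r_m))^2\to\infty$, so the bound must come from cancellation. My plan is to diagonalize $T_m$ with the Fourier--Wigner transform. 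Writing $\langle\pi_t(a,b,c)f,g\rangle=\chi(tc)V_t(f\otimes\bar g)(a,b)$ with $V_t$ the scaled transform satisfying $\langle V_t\alpha,V_t\beta\rangle=\text{mod}_K(t)^{-n}\langle\alpha,\beta\rangle$ by (\ref{fwimp}), the central phases cancel in the product of coefficients and, on rank-one tensors $\Phi=f_1\otimes f_3,\ \Psi=f_2\otimes f_4$,
\[
\langle T_m\Phi,\Psi\rangle=\int_{B(r_m)\times B(r_m)}\overline{V_t(f_2\otimes\bar f_1)(a,b)}\,V_t(f_3\otimes\bar f_4)(a,b)\,da\,db=\langle M_m V_t(f_3\otimes\bar f_4),\,V_t(f_2\otimes\bar f_1)\rangle,
\]
where $M_m$ is multiplication by $\mathbf 1_{B(r_m)\times B(r_m)}$. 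Since $\|M_m\|\le1$ and $V_t$ is unitary up to the factor $\text{mod}_K(t)^{-n/2}$, this realizes $T_m$ as $\text{mod}_K(t)^{-n}$ times an operator modelled by $M_m$, yielding the uniform bound $\|T_m\|\le\text{mod}_K(t)^{-n}$; letting $m\to\infty$, $M_m\to I$ strongly, so the right side converges to $\text{mod}_K(t)^{-n}\langle f_3,f_2\rangle\langle f_1,f_4\rangle=\text{mod}_K(t)^{-n}\langle F\Phi,\Psi\rangle$, and the uniform bound upgrades this to weak-operator convergence on all of $V_\pi\otimes V_\pi$.

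I expect the main obstacle to be turning this diagonalization into an honest unitary equivalence rather than a formula on rank-one tensors — in particular assembling the flip together with complex conjugation and $V_t$ into operators for which $\|M_m\|\le1$ genuinely controls $\|T_m\|$, and verifying the intertwining behind the cancellation of the central phases. A secondary, routine technical point is justifying the interchange of the $(a,b)$-integral with the limit in parts (i) and (ii), which is handled by the $L^1$-integrability of $\Phi$ and the fact that the relevant shells leave every compact set.
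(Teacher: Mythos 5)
Your treatment of parts i and ii is correct and essentially the paper's own argument: factor out the central variable, use the unitarity of the Fourier--Wigner transform (\ref{fwimp}) to identify $\{\mu_K(B(r_m^2))/\text{mod}_K(t)^n\}_m$ as the scaling sequence, control $g_2^{-1}F_mg_1\Delta F_m$ by noting the $(a,b)$-shifts are bounded while the central distortion is $O(r_m)\ll r_m^2$, and then feed the scaling sequences into Proposition \ref{aso}. (Two small points: the central fibre of $g_2^{-1}F_mg_1$ is a translate of $B(r_m^2)$ by an amount depending on $(a,b)$ --- a shear, not a single translation, though your fibrewise estimate is what matters; and the $c$-shell has measure $O(r_m)$ only for $K=\r$ --- for $K=\c$ it is $O(r_m^3)$ against $\mu_K(B(r_m^2))=\pi r_m^4$, so the ratio still vanishes. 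Also, since $\{r_m\}$ need not be monotone, the $(a,b)$-integrals converge by $L^1$-tail decay rather than monotonically.)

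Part iii is where there is a genuine gap, and it sits exactly where you predicted. Your identity $\langle T_m(f_1\otimes f_3),f_2\otimes f_4\rangle=\langle M_m V_t(f_3\otimes\bar f_4),V_t(f_2\otimes\bar f_1)\rangle$ is correct, but it is \emph{not} a conjugation $T_m=A^*M_mB$ by fixed bounded maps: the vectors fed into $V_t$ mix the two arguments across tensor slots ($f_3$ from $\Phi$ pairs with $f_4$ from $\Psi$, and $f_2$ from $\Psi$ with $f_1$ from $\Phi$). It is an index-reshuffled (partial-transpose) relation between $T_m$ and $V_t^*M_mV_t$, and such realignments preserve the Hilbert--Schmidt norm but not the operator norm; accordingly, a bound $|\langle T_m\Phi,\Psi\rangle|\le \text{mod}_K(t)^{-n}\|\Phi\|\|\Psi\|$ verified only on rank-one $\Phi,\Psi$ does not bound $\|T_m\|$ (cross terms in expansions are uncontrolled). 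In the archimedean case, conjugating $T_m$ into sum/difference coordinates exhibits it as essentially a Weyl quantization of the indicator of a large box, so an a priori uniform bound, while true a posteriori (WOT convergence plus Banach--Steinhaus), is genuinely nontrivial to prove directly --- and your claimed bound $\|T_m\|\le\text{mod}_K(t)^{-n}$ need not even hold exactly for finite $m$. The paper sidesteps Proposition \ref{braid} entirely here: it computes $\lim_m\langle T_m\phi_1,\phi_2\rangle$ for \emph{arbitrary} $\phi_1,\phi_2\in L^2(K^{2n})$. After the central phases cancel, the matrix coefficient is the integral over $B(r_m)\times B(r_m)$ of a fixed function of $(a,b)$ which, via the change of variables $x\mapsto y-x$, $y\mapsto x$, $a\mapsto a+x-y$, $b\mapsto -b/t$, is exhibited through the modified transform $V'$ of (\ref{fw2}) as $\text{mod}_K(t)^{-n}$ times the pointwise product $V'(F\phi_1)\cdot\overline{V'(\phi_2)}$ of two $L^2(K^{2n})$ functions, hence an $L^1$ function; the truncated integrals therefore converge to $\text{mod}_K(t)^{-n}\langle F\phi_1,\phi_2\rangle$, which is WOT convergence on the nose, with no uniform bound and no density argument needed. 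Replacing your Proposition-\ref{braid} step by this direct computation closes the gap.
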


\begin{proof} \textit{i. and ii.} That the one dimensional representations are c-tempered and satisfy the asymptotic Schur orthogonality relations will follow
once we show that $\{F_m\}_m$ is a F\o lner sequence for $H_n(K)$. For $K=\r$ and $n=1$, it is known that $\{F_m\}_m$ is \textit{one sided} F\o lner. See for instance \cite[Example 4]{janzen}. We will show that it is two sided F\o lner in the general case. In the non-archimedean case, we claim that for a fixed compact subset $C$ and any $g_1,g_2\in C$, the sequence $\{g_2^{-1}F_mg_1\Delta F_m\}_m$ is in fact eventually empty. To see this take $(x_1, y_1, z_1), (x_2, y_2, z_2)\in H_n(K)$ such that the norm on each coordinate of both these elements is bounded by $k$, say. Let $(u,v,w)\in F_m$. We have
\begin{equation}\label{mult}
    (x_1,y_1,z_1)(u,v,w)(x_2,y_2,z_2)=(x_1+u+x_2,y_1+v+y_2,z_1+w+z_2+x_1^\top v+x_1^\top y_2+u^\top y_2).
\end{equation}
The ultrametric property implies, that for large enough $m$, the product continues to be in $B(r_m)\times B(r_m)\times B(r_m^2)$. This proves our claim. In the archimedean case we adopt a strategy similar to that in \cite{kyd}. It follows from (\ref{mult}) that for any $a_1,a_2,k>0$ and $(x_1,y_1,z_1), (x_2,y_2,z_2)\in B(k)\times B(k)\times B(k)$, we have
\begin{align}
\begin{split}\label{conj}
&(x_1,y_1,z_1)(B(a_1)\times B(a_1)\times B(a_2))(x_2,y_2,z_2)\\
\subset~& B(a_1+2k)\times B(a_1+2k)\times B(a_2+2k(na_1+1)+nk^2).
\end{split}
\end{align}
For any compact $C\subset H_n(K)$ there exists $k$ such that $C\cup C^{-1}\subset B_k\times B_k\times B_k$. Then it follows from (\ref{conj}), with $a_1=r_m$ and $a_2=r_m^2$, that for any $g_1,g_2\in C$  
\begin{equation}\label{cup}
    F_m\cup g_2^{-1}F_mg_1\subset B(r_m+2k)\times B(r_m+2k)\times B(r_m^2+2k(nr_m+1)+nk^2)
\end{equation}
Again, for large  $m$, putting $a_1=r_m-2k, a_2=r_m^2-2k(nr_m+1)-nk^2$ we get
\begin{align*}
 &g_2(B(r_m-2k)\times B(r_m-2k)\times B(r_m^2-2k(nr_m+1)-nk^2))g_1^{-1}\\
 \subset~&  B(r_m)\times B(r_m)\times B(r_m^2-4nk^2)\subset F_m.
\end{align*}
Applying $g_2^{-1}$ on the left and $g_1$ on the right, we get
\begin{equation}\label{cap}
    B(r_m-2k)\times B(r_m-2k)\times B(r_m^2-2k(nr_m+1)-nk^2)\subset F_m\cap g_2^{-1}F_mg_1.
\end{equation}
It follows from (\ref{cup}) and (\ref{cap}) that $\sup_{g_1,g_2\in C}\mu(g_2^{-1}F_mg_1\Delta F_m)$ is less than or equal to the difference between the measure of the set on the RHS of (\ref{cup}) and that on the LHS of (\ref{cap}). The only archimedean local fields are $\r$ and $\c$. For $B(r)\subset K^l$, we have $\mu_{K^l}(B(r))=(2r)^l$ when $K=\r$ and $\mu_{K^l}(B(r))=(\pi r^2)^l$ when $K=\c$. Now a simple calculation shows that
\begin{align*}
   & \lim_{n\to\infty}\frac{\mu(B(r_m+2k)\times B(r_m+2k)\times B(r_m^2+2k(nr_m+1)+nk^2)
)}{\mu(F_m)}\\
=~&\lim_{n\to\infty}\frac{\mu(B(r_m-2k)\times B(r_m-2k)\times B(r_m^2-2k(nr_m+1)-nk^2))}{\mu(F_m)}.
\end{align*}
This implies that $\{F_m\}_m$ is indeed a two sided F\o lner sequence in the archimedean case too. 

\vspace{0.2cm}
\noindent For the infinite dimensional representations $\pi_t$, (\ref{fwimp}) implies
\begin{equation*}
    \lim_{m\to\infty}\frac{1}{\mu_K(B(r_m^2))}\int_{F_m}|\langle\pi_t(a,b,c)f_1,f_2\rangle|^2d\mu(a,b,c)=\frac{1}{\text{mod}_K(t)^n}\|f_1\|^2\|f_2\|^2.
    \end{equation*}
Hence for unit vectors $f_1,f_2\in L^2(K^n)$, the sequences $\{\int_{F_m}|\langle\pi_t(a,b,c)f_1,f_2\rangle|^2d\mu(a,b,c)\}_m$ are all asymptotically equal to the sequence $\{\mu_K(B(r_m^2))/\text{mod}_K(t)^n\}_m$. This already implies the first part of the second equation in \textit{ii.} via the polarization identity. The rest of the asymptotic Schur orthogonality statements will follow from Proposition \ref{aso} once we show that $\pi_t$ are c-tempered. Only condition ii. of c-temperedness remains to be shown. We use the same strategy as that for showing that $\{F_m\}_m$ is a F\o lner sequence. For the non-archimedean case since for any compact $C\subset H_n(K)$ and $g_1,g_2\in C$, the set $g_2^{-1}F_mg_1\Delta F_m$ is empty for large enough $m$, therefore the integration of $|\langle\pi_t(a,b,c)f_1,f_2\rangle|^2$ over this set is $0$ for large enough $m$. This proves condition ii. of c-temperedness in the non-archimedean case. In the archimedean case we note that it is enough to show that the limit of the integration of $|\langle\pi_t(a,b,c)f_1,f_2\rangle|^2$ over the set on the RHS of (\ref{cup}) and that over the set on the LHS of (\ref{cap}),  as $m\to\infty$, are the same.  Again using (\ref{fwimp}) we have
\begin{align*}
&\lim_{m\to\infty}\frac{1}{\mu_K(B(r_m^2))}\underset{B(r_m^2+2k(nr_m+1)+nk^2)}{\int}\underset{B(r_m+2k)}{\int}\underset{B(r_m+2k)}{\int}|\langle\pi_t(a,b,c)f_1,f_2\rangle|^2da~db~dc\\
=~&\lim_{m\to\infty}\frac{\mu_K(B(r_m^2+2k(nr_m+1)+nk^2))}{\text{mod}_K(t)^n\mu_K(B(r_m^2))}\|f_1\|^2\|f_2\|^2=\frac{1}{\text{mod}_K(t)^n}\|f_1\|^2\|f_2\|^2.
\end{align*}
Similarly, the limit as $m\to\infty$ of the integration over the set on the LHS of (\ref{cap}) is also $(1/\text{mod}_K(t)^n)\|f_1\|^2\|f_2\|^2$. Thus condition ii. of c-temperedness holds in the archimedean case too.

\vspace{0.2cm}
\noindent iii. The statement claims convergence to the flip operator on the space $\mathcal{B}(L^2(K^n)\otimes L^2(K^n))$. But using the 
isomorphism $L^2(K^n)\otimes L^2(K^n)\cong L^2(K^{2n})$ it is enough to prove the statement for $\mathcal{B}(L^2(K^{2n}))$. We will use the same notation to denote the operators on $\mathcal{B}(L^2(K^{2n}))$ corresponding to $\pi_t(g^{-1})\otimes\pi_t(g)$ and $F$. If $g=(a,b,c)$ and $\phi\in L^2(K^{2n})$ then
\begin{align*}
    ((\pi_t(g^{-1})\otimes\pi_t(g))\phi)(x,y)&:= \chi(t(a+x-y)^\top b)\phi(x+a,y-a),\\
    \text{and }F(\phi)(x,y)&:=\phi(y,x),
\end{align*}
for all $x,y\in K^n$. Let $\phi_1,\phi_2\in L^2(K^{2n})$. In the following calculation we use the change of variable $x\mapsto y-x,y\mapsto x, a\mapsto a+x-y, b\mapsto -b/t$ in the third to fourth line.
\begin{align*}
    &\lim_{m\to\infty}\frac{1}{\mu_K(B(r_m^2))}\underset{F_m}{\int}\langle\pi_t(g^{-1})\otimes\pi_t(g)\phi_1,\phi_2\rangle d\mu(g)\\
    =~&\lim_{m\to\infty}\frac{1}{\mu_K(B(r_m^2))}\underset{F_m}{\int}\underset{K^{2n}}{\int}\chi(t(a+x-y)^\top b)\phi_1(x+a,y-a)\overline{\phi_2(x,y)}dx~dy~d\mu(a,b,c)\\
    =~&\underset{K^{4n}}{\int}\chi(t(a+x-y)^\top b)\phi_1(x+a,y-a)\overline{\phi_2(x,y)}dx~dy~da~db\\
    =~&\frac{1}{\text{mod}_K(t)^n}\underset{K^{4n}}{\int}\chi(-(a-x)^\top b)\phi_1(a,y-a)\overline{\phi_2(y-x,x)}~dx~dy~da~db\\
    =~& \frac{1}{\text{mod}_K(t)^n}\underset{K^{2n}}{\int}(\underset{K^n}{\int}\chi(-a^\top b)F(\phi_1)(y-a,a)da)(\underset{K^n}{\int}\overline{\chi(-x^\top b)\phi_2(y-x,x)dx})dy~db\\
    =~&\frac{1}{\text{mod}_K(t)^n}\underset{K^{2n}}{\int}V'(F(\phi_1))(y,b)\overline{V'(\phi_2)(y,b)}dy ~db \text{ (see (\ref{fw2})) }=\frac{1}{\text{mod}_K(t)^n}\langle F(\phi_1),\phi_2\rangle\qedhere
\end{align*}
\end{proof}

\begin{remark}
    1. As the proof shows, statement iii. in Theorem \ref{hei} continues to hold if we replace $F_m$ by any sequence of exhausting compact pre-compact sets of the form $A^1_m\times A^2_m\times A^3_m$ and $\mu_K(B(r_m^2))$ by $\mu_K(A^3_m)$. The same can be said about asymptotic Schur orthogonality relations involving a single infinite dimensional representation. But orthogonality relations involving distinct representations require c-temperedness \textit{with the same F\o lner sequence}, which necessitates the sequence $\{F_m\}_m$.

\vspace{0.2cm}
\noindent  2. Let $\phi$ be an automorphism of a locally compact group $G$ and $(\pi,V_\pi)$ be an irreducible unitary representation of $G$. Consider the twisted representation  $(\phi\cdot\pi)(g):=\pi(\phi^{-1}(g))$. If convergence to braiding operator holds for $\pi$ with respect to the sequence of measures $\{\mu_n\}_n$, then the same holds for $\phi\cdot\pi$ with respect to the measures $\{\phi_*\mu_n\}_n$. Similarly, if $\pi$ is c-tempered with F\o lner sequence $\{F_n\}_n$ then $\phi\cdot\pi$ is so with F\o lner sequence $\{\phi^{-1}(F_n)\}_n$. These observations are pertinent for the Heisenberg groups over local fields since any infinite dimensional irreducible representations (up to unitary equivalence) can be obtained from another via twisting by an automorphism. Thus, proving convergence to braiding operator or c-temperedness for one will imply the same for all others, except for the modification of the measures or the F\o lner sequence by the twisting automorphism. The latter in particular is undesirable since orthogonality relations between distinct representations require c-temperedness with the same F\o lner sequence.    
    \end{remark}

\section*{Acknowledgement}
We thank Vivek Tewary for making us aware of the Fourier-Wigner transform. The first author thanks Krea University for hospitality during multiple visits.

\end{document}